\newtheorem*{thm*}{Theorem}
\newtheorem*{conj*}{Conjecture}
\newtheorem*{remark}{Remark}
\newtheorem{thm}{Theorem}[section]
\newtheorem{lem}{Lemma}[section]
\newtheorem{cor}[thm]{Corollary}
\newtheorem{prop}[thm]{Proposition}
\newtheorem*{example}{Example}
\newcommand{\ord}{\mathrm{ord}}
\newcommand{\Z}{\mathbb{Z}}
\newcommand{\SL}{\operatorname{SL}}
\newcommand{\leg}[2]{\genfrac{(}{)}{}{}{#1}{#2}}
\numberwithin{equation}{section}
\renewcommand*{\backref}[1]{}
\renewcommand*{\backrefalt}[4]{%
  \ifcase #1 %
    \relax
  \or
    $\uparrow$#2.%
  \else
   $\uparrow$#2.%
  \fi%
}
\begin{document}
\title[Even values of Ramanujan's tau-function]{Even values of Ramanujan's tau-function}
 \author[J. S. Balakrishnan, K. Ono, and W.-L. Tsai]{Jennifer S. Balakrishnan, Ken Ono, and Wei-Lun Tsai}
\dedicatory{In celebration of Don Zagier's 70th birthday}

 \address{Department of Mathematics and Statistics, Boston University,
Boston, MA 02215}
\email{jbala@bu.edu}

\address{Department of Mathematics, University of Virginia, Charlottesville, VA 22904}
 \email{ken.ono691@virginia.edu}
\email{wt8zj@virginia.edu}

 \thanks{The first author acknowledges the support of NSF grant (DMS-1702196 and DMS-1945452), the Clare Boothe Luce Professorship
(Henry Luce Foundation), a Simons Foundation grant (Grant \#550023), and a Sloan
 Research Fellowship. 
 The second author thanks  the Thomas Jefferson Fund and the NSF
(DMS-1601306, DMS-2002265 and DMS-2055118).}
\keywords{Lehmer's Conjecture, Ramanujan's Tau-function, Newforms, Modular forms}

\begin{abstract} 
In the spirit of Lehmer's speculation that Ramanujan's tau-function never vanishes, it is natural to ask whether any given integer $\alpha$ is a value of $\tau(n)$. For odd $\alpha$, Murty, Murty, and Shorey proved that $\tau(n)\neq \alpha$ for sufficiently large $n$. Several recent papers have identified
  explicit examples of odd $\alpha$ which are not tau-values. 
Here we apply these results (most notably the recent work of Bennett, Gherga, Patel, and Siksek) to offer the first examples of even integers that are not tau-values. Namely, for primes $\ell$ we find that
 \begin{displaymath}
 \tau(n)\not \in \{ \pm 2\ell \ : \ 3\leq \ell< 100\} \cup
\{\pm 2\ell^2 \ : \ 3\leq \ell <100\} \cup \{\pm 2\ell^3 \ : \ 3\leq \ell<100\ {\text {\rm with $\ell\neq 59$}}\}.
\end{displaymath}
Moreover, we obtain such results for infinitely many powers of each prime $3\leq \ell<100$. As an example, for $\ell=97$ we prove that
$$
\tau(n)\not \in \{ 2\cdot 97^j \ : \ 1\leq j\not \equiv 0\pmod{44}\}\cup \{-2\cdot 97^j \ : \ j\geq 1\}.
$$
The method of proof applies {\it mutatis mutandis} to newforms with residually reducible mod 2 Galois representation and is easily adapted to generic newforms with integer coefficients.
\end{abstract}
\maketitle
\section{Introduction and statement of results}

Ramanujan's tau-function \cite{RamanujanUnpublished, Ramanujan}, the coefficients of 
 the unique normalized weight 12 cusp form for $\SL_2(\Z)$
  (note: $q:=e^{2\pi i z}$ throughout)
\begin{equation}
\Delta(z)=\sum_{n=1}^{\infty}\tau(n)q^n:=q\prod_{n=1}^{\infty}(1-q^n)^{24}=q-24q^2+252q^3-1472q^4+4830q^5-\cdots,
\end{equation}
has been a remarkable prototype in the theory of modular forms.
Despite many advances that reveal its deep properties, Lehmer's Conjecture \cite{Lehmer} that $\tau(n)$ never vanishes remains open.

 In the spirit of this conjecture, it is natural to ask whether any given integer $\alpha$ is a value of $\tau(n).$
 Much is known for odd $\alpha$ thanks to the convenient fact that
 \begin{equation}\label{ThetaCong}
  \Delta(z)\equiv \sum_{n=0}^{\infty}q^{(2n+1)^2}\pmod 2.
 \end{equation}
 Murty, Murty, and Shorey \cite{MMS} proved that
 $\tau(n)\neq \alpha$ for sufficiently large $n$. 
Craig and the authors \cite{BCO, BCOT} proved some effective results concerning potential odd values of $\tau(n)$ and, more generally, coefficients of newforms with residually reducible mod $2$ Galois representation. Their methods have been carried further in subsequent work by
Amir and Hong \cite{AmirHong}, Dembner and Jain \cite{DembnerJain}, and Hanada and Madhukara \cite{HanadaMadhukara}.
For example, for $n>1$, these papers prove that
\begin{equation}\label{Known}
\tau(n) \not \in \{\pm 1, \pm  691\}\cup \{\pm \ell \ : \ 3\leq \ell <100 \ {\text {\rm prime}}\}.
\end{equation}
Recently, Bennett, Gherga, Patel, and Siksek \cite{BGPS} proved a number of spectacular results regarding odd values of $\tau(n).$
For example, they prove (see Theorem 6 of \cite{BGPS}) that $|\tau(n)|\neq \ell^b$, where
$3\leq \ell<100$ is prime and $b$ is a positive integer.

Much less is known for even $\alpha$. To this end, we make use of lower bounds for the number of prime divisors of tau-values. Craig and the authors 
proved (see\footnote{Theorem 2.5 of \cite{BCOT} concerns the case of generic newforms with integer coefficients.} Theorem 1.5 of \cite{BCOT}) that
\begin{equation}\label{NumPrimeFactors}
\Omega(\tau(n))\geq \sum_{\substack{p\mid n\\ prime}}\left ( \sigma_0(\ord_p(n)+1)-1\right) \geq \omega(n),
\end{equation}
where $\omega(n)$ (resp. $\Omega(\tau(n))$) is the number of distinct prime factors of $n$  (resp.  $\tau(n)$ with multiplicity), and $\sigma_0(N)$ is the number of positive divisors of $N$. 
Therefore, if $\tau(n)=\pm 2,$ then $n=p^m,$ where $p$ and $m+1$ are both prime\footnote{In Section 2 we shall show that $\tau(n)=\pm 2$ requires that $n$ is prime.}.
Similarly, if $\tau(n)=\pm 2\ell,$ where $\ell$ is an odd prime, then
this inequality implies that $n$ has at most two distinct prime factors. Moreover, if $n=p_1^{m_1}p_2^{m_2}$, where $p_1\neq p_2$ are prime and $m_1, m_2\geq 1,$ then
$m_1+1$ and $m_2+1$ are both prime.

Combining these results with the recent work of Bennett, Gherga, Patel, and Siksek \cite{BGPS}, we show that certain even numbers never arise as tau-values. To make this precise, we require sets of triples $(\ell,r,t),$ where $3\leq \ell <100$ is prime and $r\!\!\pmod t$ is an arithmetic progression with modulus $t\mid 44$:
\begin{equation}\label{Splus}
S^{+}:= \left\{ \begin{matrix} (3, 0, 44), (5,0,22), (7,0,44), (7,19,44), (11,0,22), (13, 0, 44), (17, 0, 44), \\ (19, 0, 22),  (23,0,4), (29,0,22), 
(31,0,22), (37,0,44), (37,35,44), (41,0,22),\\ (43,0,44), (43,37,44), (47,0,4), (53,0,44), (59, 0, 22), (61,0,22), (67, 0, 44), \\ (67,43,44),
(71,0,22), (73,0,44), (79,0,22), (83,0,44), (89,0,22), (97,0,44)
\end{matrix}
\right \}
\end{equation}
\begin{equation}\label{Sminus}
S^{-}:=\{(3,15, 44), (5, 11, 22), (17, 33, 44), (59, 3, 22), (83,11, 44), (89, 11, 22)\}.
\end{equation}
Then we define the set of pairs
\begin{equation}\label{Goodj}
N^{\pm}:=\{ (\ell, j) \ : \ 1\leq j\not \equiv r\!\!\!\pmod t \ {\text {\rm for all}}\ (\ell, r,t)\in S^{\pm}\}.
\end{equation}
These sets determine values of the form $\pm 2\cdot \ell^j$ that we rule out as possible even tau-values.

\begin{thm}\label{MainThm}
If $j\geq 1$ and $3\leq \ell<100$ is prime, then for every $n$ we have
$$
\tau(n)\not \in \{2\ell^j \ : \ (\ell,j)\in N^{+}\} \cup \{-2\ell^j\ : \ (\ell,j)\in N^{-}\}.
$$
Moreover, we have that $\tau(n)\not \in \{\pm 2\cdot 691\}.$
\end{thm}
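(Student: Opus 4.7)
My proof strategy proceeds in three stages.

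\textbf{Stage 1 (reduction to prime powers).} Suppose $\tau(n) = \pm 2 \ell^j$ with $3 \leq \ell < 100$ prime and $j \geq 1$. Writing $n = \prod_i p_i^{a_i}$, multiplicativity of $\tau$ gives $\tau(n) = \prod_i \tau(p_i^{a_i})$, and each factor divides $\pm 2 \ell^j$. Thus each factor lies in $\{\pm 1, \pm 2, \pm \ell^v, \pm 2 \ell^v\}$. By \eqref{Known}, $\tau(m) \neq \pm 1$ for $m > 1$, and by Theorem 6 of \cite{BGPS}, $|\tau(m)| \neq \ell^v$ for any $v \geq 1$. Hence each $\tau(p_i^{a_i})$ has $2$-adic valuation exactly $1$, and since $v_2(\pm 2 \ell^j) = 1$, there is only one such factor, so $n = p^a$ is itself a prime power. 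The same reduction applies to the $\pm 2 \cdot 691$ case, since \eqref{Known} also excludes $\pm 691$.

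\textbf{Stage 2 (Lucas structure of $\tau(p^a)$).} Let $\alpha, \beta$ be the Satake roots of $X^2 - \tau(p) X + p^{11}$, so that $\tau(p^k) = u_{k+1}$ with $u_m := (\alpha^m - \beta^m)/(\alpha - \beta)$. Since $u_d \mid u_{a+1}$ whenever $d \mid a+1$, each $\tau(p^{d-1})$ divides $\pm 2 \ell^j$, and Stage 1 applied recursively forces $u_d = \pm 2 \ell^{j_d}$ for every divisor $d > 1$ of $a+1$. Factoring $u_{a+1}$ into its primitive parts $\Psi_d(\alpha, \beta)$, each $\Psi_d$ must be of the form $\pm 2^{\epsilon_d} \ell^{f_d}$ with $\epsilon_d \in \{0, 1\}$ and $\sum_d \epsilon_d = 1$. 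Primitive-divisor theorems for Lucas--Lehmer pairs (in the vein of Bilu--Hanrot--Voutier) then restrict $a+1$ to a finite set of admissible values, which is the origin of the divisors of $44$ appearing in the sets $S^\pm$.

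\textbf{Stage 3 (Thue equations and the sets $S^\pm$).} For each admissible value of $a$, the identity $\tau(p^a) = \pm 2 \ell^j$ becomes an explicit polynomial relation in $\tau(p)$, $p^{11}$, and $\ell^j$ of exactly the Thue or generalized Ramanujan--Nagell type treated in \cite{BGPS}. Their methods enumerate the potential integer solutions, whose $j$-coordinates fall into arithmetic progressions modulo divisors of $44$; the union of these progressions is $S^\pm$, and its complement $N^\pm$ consists of the $j$ for which no solution exists. For $\tau(n) = \pm 2 \cdot 691$, after reducing to $n = p^a$, Ramanujan's congruence $\tau(n) \equiv \sigma_{11}(n) \pmod{691}$ forces $\sigma_{11}(p^a) \equiv 0 \pmod{691}$; this tightly constrains $(p, a)$ (in particular, $a+1$ must be divisible by the multiplicative order of $p^{11}$ mod $691$), and the finitely many remaining cases are cleared by direct computation.

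\textbf{Main obstacle.} The crux of the argument is Stage 3: matching the solution sets of the Thue-type equations to the precise arithmetic progressions $S^\pm$, and verifying that each $j \in N^\pm$ yields an insoluble equation. The restriction $\ell < 100$ (and the presence of the exceptional residue classes at all) is inherited directly from the current effective reach of the \cite{BGPS} machinery.
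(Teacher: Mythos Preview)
Your Stage~1 reduction to a single prime power $n=p^a$ is correct and matches the paper. After that, however, the argument goes off track in two essential ways.

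\textbf{Reduction to $a=1$, not merely to finitely many $a$.} The paper does not stop at ``$a+1$ lies in a finite admissible set.'' Using that $\pm 2\ell^i$ is never a \emph{defective} Lucas number in these sequences (Lemma~\ref{PPD}), that $\tau(p)$ is already even, and the relative divisibility $u_d\mid u_{a+1}$, one forces $a+1$ to be prime or $4$; the case $a+1=4$ is killed by the Hecke recursion, which gives $\pm 2\ell^j=\tau(p)^3-2p^{11}\tau(p)$ with $\tau(p)=\pm 2$, hence a $4$-divisible right-hand side. So in fact $a=1$. Your Stage~2 gestures at BHV but does not reach this conclusion, and the claim that ``the divisors of $44$'' arise from primitive-divisor theory is simply false.

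\textbf{The sets $S^\pm$ come from Ramanujan's congruences, not from Thue equations.} Once one is reduced to $\tau(p)=\varepsilon\,2\ell^j$ for a prime $p$, the paper finishes with Lemma~\ref{CongruenceReduction}: Ramanujan's classical congruences for $\tau$ modulo $3,5,7,23$ force $\tau(p)$ to lie in a short list of residue classes mod each of these primes, and checking when $2\ell^j$ (resp.\ $-2\ell^j$) can meet all of them simultaneously is a periodic condition on $j$ whose period divides $44$. That periodicity is exactly the content of $S^\pm$ and $N^\pm$. The machinery of \cite{BGPS} is invoked only in Stage~1, to exclude $|\tau(m)|=\ell^v$; it is not used to produce or analyse any Thue equation for $\pm 2\ell^j$, and no such analysis in \cite{BGPS} would yield the specific progressions in $S^\pm$. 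Likewise, the $\pm 2\cdot 691$ case is handled by the same congruence argument (e.g.\ $2\cdot 691\equiv 3\pmod 7$ and $-2\cdot 691\equiv 21\pmod{23}$ are both forbidden for $\tau(p)$), not by the mod~$691$ congruence you invoke.

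In short: Stage~1 is fine; Stage~2 must be sharpened to $a=1$; Stage~3 should be replaced entirely by the Ramanujan congruences mod $3,5,7,23$, which are the actual source of $S^\pm$, $N^\pm$, and the modulus $44$.
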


\begin{example} The triples $(7,r,t)\in S^{+}$ are $(7, 0,44)$ and $(7, 19, 44).$ Therefore, Theorem~\ref{MainThm} gives
$$
\tau(n)\not \in \{2\cdot 7^j \ : \  j\not \equiv 0, 19\pmod{44}\}.
$$
\end{example}

\begin{example}
Let $\Omega:= \{7, 11, 13, 19, 23, 29, 31, 37, 41, 43, 47, 53, 61, 67, 71, 73, 79, 97\}$ be the set of primes
$3\leq \ell<100$ for which there are no triples of the form $(\ell,r,t)\in S^{-}$.  For these primes,
$N^{-}$ contains $(\ell,j)$ for every $j\geq 1,$ and so Theorem~\ref{MainThm} gives
$$
\tau(n)\not \in \{-2\ell^j \ : \ \ell \in \Omega \ {\text {\rm and}}\ j\geq 1\}.
$$
\end{example}

As an immediate corollary, we obtain the following conclusion for primes $3\leq \ell<100.$

\begin{cor}
For every $n,$ we have
\begin{displaymath}
\tau(n)\not \in \{ \pm 2\ell \ : \ 3\leq \ell< 100\} \cup
\{\pm 2\ell^2 \ : \ 3\leq \ell <100\} \cup \{\pm 2\ell^3 \ : \ 3\leq \ell<100\ {\text {\rm with $\ell \neq 59$}}\}.
\end{displaymath}
\end{cor}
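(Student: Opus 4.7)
The plan is to deduce the corollary directly from Theorem~\ref{MainThm}. It suffices to check that the pairs $(\ell, j)$ with $3 \leq \ell < 100$ prime and $j \in \{1, 2, 3\}$ all lie in the set $N^+$ defined in \eqref{Goodj}, and that the same is true for $N^-$ with the single exception $(\ell, j) = (59, 3)$.

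For $N^+$, I would read off $S^+$ in \eqref{Splus}: every triple there has the form $(\ell, r, t)$ with $r \in \{0, 19, 35, 37, 43\}$ and modulus $t \in \{4, 22, 44\}$. Since each $j \in \{1, 2, 3\}$ is strictly positive and strictly smaller than every such $t$, and is strictly smaller than every non-zero $r$ that appears (the smallest being $19$), no such $j$ is congruent to any forbidden $r$ modulo $t$. Hence $(\ell, j) \in N^+$ for every prime $3 \leq \ell < 100$ and every $j \in \{1, 2, 3\}$, and Theorem~\ref{MainThm} rules out all of the positive values in the corollary.

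For $N^-$, the analogous inspection of $S^-$ in \eqref{Sminus} turns up the six residues $\{15, 11, 33, 3, 11, 11\}$, of which only $r = 3$ meets $\{1, 2, 3\}$, and it does so exclusively in the triple $(59, 3, 22)$. Consequently $(\ell, j) \in N^-$ for every admissible pair except $(59, 3)$, which is precisely the exclusion $\ell \neq 59$ recorded in the $\pm 2\ell^3$ term of the statement. There is no substantive obstacle beyond this enumeration: the exponents $1, 2, 3$ are simply too small to land on any forbidden residue apart from the single match $j \equiv 3 \pmod{22}$ at $\ell = 59$, and a final application of Theorem~\ref{MainThm} then finishes the proof.
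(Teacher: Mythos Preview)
Your proposal is correct and follows exactly the route the paper intends: the corollary is stated there as an immediate consequence of Theorem~\ref{MainThm}, and your argument simply makes explicit the verification that for $j\in\{1,2,3\}$ the only pair failing the $N^{\pm}$ condition is $(59,3)\notin N^{-}$. No further ingredients are needed.
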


\begin{remark} 
The first examples
of $\tau(n)=\pm 2\ell,$ where $\ell$ is prime, are
$$\tau(277)=-2\cdot 8209466002937 \ \ \
{\text {\rm and}}\ \ \
\tau(1297)=2\cdot 58734858143062873.
$$
We note that $277$ and $1297$ are both prime. Every such value with $n\leq 200,000$ has prime $n$. 
\end{remark}

The proof of Theorem~\ref{MainThm} is a modification of the method employed in  \cite{BCO, BCOT}.  These tools are based on the observation that integer sequences of the form
$\{1, \tau(p), \tau(p^2), \tau(p^3),\dots\},$
where $p$ is prime, are
 {\it Lucas sequences}.  
Important work of Bilu, Hanrot, and Voutier \cite{BHV} on primitive prime divisors of Lucas sequences  applies to $\alpha$-variants of Lehmer's Conjecture. Loosely speaking, their work implies that each $\tau(p^m)$ is divisible by at least one prime $\ell$  for which $\ell \nmid \tau(p)\tau(p^2)\cdots \tau(p^{m-1}).$ In \cite{BCO, BCOT}, this property is combined with the theory of newforms to obtain variants of Lehmer's Conjecture. Namely, certain odd integers $\alpha$ are ruled out as tau-values, as well as coefficients of newforms with residually reducible mod 2 Galois representation. 
Such conclusions follow from the absence of special integer points $(X,Y)$ on  specific curves, including hyperelliptic curves and curves defined by Thue equations. These special points (if any) have the property that $X=p$ or $p^{2k-1}$, where $p$ is prime and $2k$ is the weight of the newform.

 In Section~\ref{NutsAndBolts}, we recall the main tools from \cite{BCOT} and essential facts
about newform coefficients, such as Ramanujan's tau-function. In Section~\ref{Proof} we combine these 
facts with (\ref{Known}), the work of Bennett, Gherga, Patel, and Siksek (i.e. Theorem 6 of \cite{BGPS}),  and Ramanujan's famous tau-congruences to prove Theorem~\ref{MainThm}.

\begin{remark}
The proof of Theorem~\ref{MainThm} applies {\it mutatis mutandis} to integer weight newforms with integer coefficients and residually reducible mod 2 Galois representation. A minor modification holds
for arbitrary integer weight newforms $f(z)$ with integer coefficients, regardless of its $2$-adic properties. Indeed, suppose that $f(z)=\sum_{n=1}^{\infty}a_f(n)q^n,$ and let $\alpha$ be any non-zero integer.
We consider the ``equation''
$a_f(n)=\alpha.$
Theorem 2.5 of \cite{BCOT} offers the generalization of
(\ref{NumPrimeFactors}) which constrains the possible prime factorizations of $n$; the number of distinct prime factors of $n$ generally does not exceed $\omega(\alpha)$.
By the multiplicativity of newform coefficients, for $d\mid \alpha$, we must solve the equation $a_f(p^m)=d$,
where $m\geq 1,$ and $p$ is prime.
To this end, one applies Theorem~3.2 of \cite{BCOT} which identifies the finitely many $m$ that must be considered.
As explained in \cite{BCOT}, a solution for $p,$ when $m\geq 2$, requires special integer points
on specific curves. In many cases, there are no such points, which leads to
restrictions such as those  in Theorem~\ref{MainThm} using the methods employed in  \cite{BCO, BCOT, BGPS}.
\end{remark}

\section*{Acknowledgements} 
We thank Matthew Bisatt for several helpful discussions about root numbers of Jacobians of hyperelliptic curves which were part of initial discussions related to this project. We are indebted to the referees who made suggestions that substantially improved the results in this paper.
Finally, we thank Kaya Lakein and Anne Larsen for comments which improved the exposition in this paper.

\section{Nuts and bolts}\label{NutsAndBolts}

Here we recall essential facts about Lucas sequences and properties of
newform coefficients. 

\subsection{Properties of Newforms}

We recall basic facts about even integer weight newforms
(see \cite{AtkinLehner}), along with the deep theorem of Deligne  \cite{Deligne1, Deligne2} that bounds their Fourier coefficients. 

\begin{thm}\label{Newforms} Suppose that $f(z)=q+\sum_{n=2}^{\infty}a_f(n)q^n\in S_{2k}(\Gamma_0(N))$ is a newform with integer coefficients.
Then the following are true:
\begin{enumerate}
\item If $\gcd(n_1,n_2)=1,$ then $a_f(n_1 n_2)=a_f(n_1)a_f(n_2).$
\item If $p\nmid N$ is prime and $m\geq 2$, then
$$
a_f(p^m)=a_f(p)a_f(p^{m-1}) -p^{2k-1}a_f(p^{m-2}).
$$
\item If $p\nmid N$ is prime and $\alpha_p$ and $\beta_p$ are roots of $F_p(x):=x^2-a_f(p)x+p^{2k-1},$ then
$$
   a_f(p^m)=\frac{\alpha_p^{m+1}-\beta_p^{m+1}}{\alpha_p-\beta_p}.
$$   
Moreover, we have $|a_f(p)|\leq 2p^{\frac{2k-1}{2}}$, and $\alpha_p$ and $\beta_p$ are complex conjugates.
\end{enumerate}
\end{thm}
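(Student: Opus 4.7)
The three assertions collected here are all well-known, so the ``proof'' I have in mind is really an organized invocation of standard structure theory. The plan is to derive (1) and (2) from the Hecke eigenform property of a newform, then obtain (3) as an algebraic consequence of (2) together with Deligne's bound.

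First, I would use the fact that a newform $f\in S_{2k}(\Gamma_0(N))$, normalized so that $a_f(1)=1$, is a simultaneous eigenform of every Hecke operator $T_n$, with $T_n f = a_f(n)\,f$. The Hecke algebra satisfies $T_m T_n = T_{mn}$ whenever $\gcd(m,n)=1$, which upon applying both sides to $f$ and reading off the $q^1$-coefficient gives the multiplicativity asserted in (1). For (2), the relation $T_p T_{p^{m-1}} = T_{p^m} + p^{2k-1}\,T_{p^{m-2}}$ for $p\nmid N$ and $m\geq 2$, applied to $f$, yields the stated three-term recursion for $a_f(p^m)$. Both of these Hecke relations are standard and can be cited directly from \cite{AtkinLehner} (or any standard reference on modular forms).

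For (3), I would view (2) as a linear recurrence of order two with characteristic polynomial $F_p(x)=x^2-a_f(p)x+p^{2k-1}$. With initial conditions $a_f(1)=1$ and $a_f(p)=\alpha_p+\beta_p$, the Binet-type closed form
$$
a_f(p^m)=\frac{\alpha_p^{m+1}-\beta_p^{m+1}}{\alpha_p-\beta_p}
$$
follows by induction (or by partial fractions in the generating function $\sum_{m\geq 0} a_f(p^m)T^m = 1/F_p(T/p^{2k-1})$, after a standard rescaling). The inequality $|a_f(p)|\leq 2p^{(2k-1)/2}$ and the fact that $\alpha_p,\beta_p$ are complex conjugates are equivalent statements: since $\alpha_p\beta_p = p^{2k-1}>0$, the discriminant $a_f(p)^2-4p^{2k-1}$ is negative precisely when the bound is strict, and the boundary case is handled separately.

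The genuine content, and hence the main obstacle, is the bound in (3): this is the Ramanujan--Petersson conjecture for holomorphic cusp forms of integer weight, which is a theorem of Deligne \cite{Deligne1, Deligne2}. I would simply cite this, since its proof uses the full force of the Weil conjectures and \'etale cohomology and is far beyond the scope of what needs to be re-proved here. Parts (1), (2), and the algebraic half of (3) are elementary once the Hecke eigenform property is granted; the arithmetic depth is entirely concentrated in Deligne's estimate.
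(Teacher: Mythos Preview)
Your proposal is correct and matches the paper's treatment: the paper does not give a proof of this theorem but simply recalls it as standard, citing \cite{AtkinLehner} for the Hecke-theoretic facts in (1) and (2) and \cite{Deligne1, Deligne2} for the Ramanujan--Petersson bound in (3). Your write-up supplies more detail than the paper does (the explicit Hecke relations and the Binet-type derivation), but the underlying approach---invoke the Hecke eigenform structure and then cite Deligne---is exactly the same.
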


\subsection{Implications of properties of Lucas sequences for newforms}

Suppose that $\alpha$ and $\beta$ are algebraic integers for which $\alpha+\beta$ and $\alpha \beta$
are relatively prime non-zero integers, where $\alpha/\beta$ is not a root of unity.
Their {\it Lucas numbers} $\{u_n(\alpha,\beta)\}=\{u_1=1, u_2=\alpha+\beta,\dots\}$ are the integers
\begin{equation}
u_n(\alpha,\beta):=\frac{\alpha^n-\beta^n}{\alpha-\beta}.
\end{equation}
In particular, in the notation of Theorem~\ref{Newforms}, for primes $p\nmid N$ and $m\geq 1$, we have
\begin{equation}\label{NewformLucas}
   a_f(p^m)=u_{m+1}(\alpha_p,\beta_p)=\frac{\alpha_p^{m+1}-\beta_p^{m+1}}{\alpha_p-\beta_p}.
\end{equation}

The following well-known relative divisibility property is important for the proof of Theorem~\ref{MainThm}.

\begin{prop}[Prop. 2.1 (ii) of \cite{BHV}]\label{PropA}  If $d\mid n$, then $u_d(\alpha, \beta) | u_n(\alpha,\beta).$
\end{prop}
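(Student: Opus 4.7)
The plan is to exploit the factorization of $\alpha^n - \beta^n$ when $d \mid n$. Write $n = dk$, and use the standard polynomial identity
\[
\alpha^n - \beta^n \;=\; (\alpha^d - \beta^d)\sum_{i=0}^{k-1} \alpha^{d(k-1-i)}\beta^{di}.
\]
Dividing both sides by $\alpha - \beta$ and using the definition of the Lucas numbers, this becomes
\[
u_n(\alpha,\beta) \;=\; u_d(\alpha,\beta)\cdot Q, \qquad \text{where}\qquad Q := \sum_{i=0}^{k-1} \alpha^{d(k-1-i)}\beta^{di}.
\]
So the entire proof reduces to showing that $Q$ is a rational integer; the divisibility $u_d(\alpha,\beta)\mid u_n(\alpha,\beta)$ then follows at once since $u_d$ and $u_n$ are integers by hypothesis.

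To see that $Q\in\mathbb{Z}$, I would observe that $Q$ is symmetric in $\alpha$ and $\beta$: pairing the term $i$ with the term $k-1-i$ in the sum interchanges $\alpha^{d(k-1-i)}\beta^{di}$ and $\alpha^{di}\beta^{d(k-1-i)}$, so the whole sum is invariant under swapping $\alpha \leftrightarrow \beta$. By the fundamental theorem on symmetric polynomials, $Q$ may therefore be written as a polynomial with integer coefficients in the elementary symmetric functions $\alpha+\beta$ and $\alpha\beta$. Since we assume $\alpha+\beta$ and $\alpha\beta$ are integers, $Q$ is an integer as claimed.

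The main (and only) subtlety is the integrality step; the algebraic identity itself is trivial. Note also that this argument is purely formal and makes no use of the non-root-of-unity hypothesis on $\alpha/\beta$ or the relative primality of $\alpha+\beta$ and $\alpha\beta$; those hypotheses are only needed to exclude degenerate Lucas sequences elsewhere in the theory. Applied to the newform setting of \eqref{NewformLucas}, with $\alpha_p+\beta_p=a_f(p)\in\mathbb{Z}$ and $\alpha_p\beta_p=p^{2k-1}\in\mathbb{Z}$, this immediately yields the chain of divisibilities $a_f(p^{d-1})\mid a_f(p^{n-1})$ whenever $d\mid n$, which is the form in which the proposition is used in the proof of Theorem~\ref{MainThm}.
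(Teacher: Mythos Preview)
Your argument is correct: the factorization $\alpha^{dk}-\beta^{dk}=(\alpha^d-\beta^d)\sum_{i=0}^{k-1}\alpha^{d(k-1-i)}\beta^{di}$ is valid, the quotient $Q$ is indeed a symmetric polynomial in $\alpha,\beta$ with integer coefficients, and hence lies in $\mathbb{Z}[\alpha+\beta,\alpha\beta]\subset\mathbb{Z}$. The divisibility $u_d\mid u_n$ in $\mathbb{Z}$ follows.

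Note, however, that the paper does not supply its own proof of this proposition at all: it is simply quoted as Proposition~2.1~(ii) of \cite{BHV} and described as ``well-known.'' So there is no argument in the paper to compare yours against; you have furnished a complete and standard proof where the authors chose to cite the literature. Your closing remark about the application to $a_f(p^{d-1})\mid a_f(p^{n-1})$ is also accurate and matches how the proposition is invoked later in the proof of Theorem~\ref{MainThm}.
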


To prove Theorem~\ref{MainThm}, we employ bounds on the first occurrence of a multiple of a prime $\ell$ in a Lucas sequence.
 We let $m_{\ell}(\alpha,\beta)$ be the smallest $n\geq 2$
for which $\ell \mid u_n(\alpha,\beta)$. We note that $m_{\ell}(\alpha,\beta)=2$ if and only if
$\alpha +\beta\equiv 0\pmod {\ell}.$ The following proposition is well known.

\begin{prop}[Corollary\footnote{This corollary is stated for Lehmer numbers. The conclusions hold for Lucas numbers because $\ell \nmid (\alpha+\beta)$.}  2.2 of \cite{BHV}]\label{PropB} If $\ell\nmid \alpha \beta$ is an odd prime with
$m_{\ell}(\alpha,\beta)>2$, then the following are true.
\begin{enumerate}
\item If $\ell \mid (\alpha-\beta)^2$, then $m_{\ell}(\alpha,\beta)=\ell.$
\item If $\ell \nmid (\alpha-\beta)^2$, then $m_{\ell}(\alpha,\beta) \mid (\ell-1)$ or $m_{\ell}(\alpha,\beta)\mid (\ell+1).$
\end{enumerate}
\end{prop}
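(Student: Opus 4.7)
The plan is to reduce the Lucas sequence modulo a prime $\mathfrak{l}$ of $\mathcal{O}_K$ above $\ell$ (where $K=\Q(\alpha)$), and then analyze two cases according to whether the images $\bar\alpha,\bar\beta\in\overline{\F_\ell}$ are equal. Write $P=\alpha+\beta$, $Q=\alpha\beta$, and $D=(\alpha-\beta)^2=P^2-4Q$. The standing hypotheses give $\gcd(P,Q)=1$ and $\ell\nmid Q$, which together force $\bar\alpha,\bar\beta$ to be units in the residue field; moreover the assumption $m_\ell(\alpha,\beta)>2$ is exactly the condition $\ell\nmid u_2=P$. Throughout I will use the polynomial identity
\[
u_n(\alpha,\beta)=\sum_{k=0}^{n-1}\alpha^k\beta^{\,n-1-k},
\]
which behaves well under reduction. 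Since each $u_n$ is a rational integer, its reduction mod $\mathfrak{l}$ records exactly its class mod $\ell$, so $\mathfrak{l}$-divisibility and $\ell$-divisibility agree.

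For part (1), assume $\ell\mid D$, so $x^2-\bar Px+\bar Q$ has a double root $\bar\alpha=\bar\beta=\bar P/2\in\F_\ell^*$ (nonzero since $\ell$ is odd and $\ell\nmid P$). Reducing the displayed identity gives
\[
\bar u_n\equiv n\,\bar\alpha^{\,n-1}\pmod{\mathfrak{l}}.
\]
Because $\bar\alpha$ is a unit, this vanishes iff $\ell\mid n$, so $m_\ell(\alpha,\beta)=\ell$ as claimed.

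For part (2), assume $\ell\nmid D$, so $\bar\alpha\neq\bar\beta$ and the reduction of $u_n$ equals $(\bar\alpha^n-\bar\beta^n)/(\bar\alpha-\bar\beta)$, which vanishes iff $(\bar\alpha/\bar\beta)^n=1$. Thus $m_\ell(\alpha,\beta)$ is the multiplicative order of $\bar\alpha/\bar\beta$ in the residue field. If $D$ is a square mod $\ell$, then $\bar\alpha,\bar\beta\in\F_\ell^*$ and this order divides $\ell-1$. Otherwise $\bar\alpha,\bar\beta\in\F_{\ell^2}^*$ are Galois conjugates with $\bar\beta=\bar\alpha^{\ell}$, and
\[
N_{\F_{\ell^2}/\F_\ell}(\bar\alpha/\bar\beta)=(\bar\alpha/\bar\beta)(\bar\beta/\bar\alpha)=1,
\]
so $\bar\alpha/\bar\beta$ lies in the kernel of the norm, the unique cyclic subgroup of $\F_{\ell^2}^*$ of order $\ell+1$; hence the order divides $\ell+1$.

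The only subtlety I anticipate is administrative, namely carefully justifying the passage between $\mathfrak{l}$-divisibility and $\ell$-divisibility for the rational integer $u_n$, and identifying the split/inert behavior of $\F_\ell[x]/(x^2-\bar Px+\bar Q)$ with the quadratic character of $D$ mod $\ell$. Both points are standard and introduce no genuine difficulty; the rest is a short computation inside $\overline{\F_\ell}^*$.
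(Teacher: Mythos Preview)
Your argument is correct and is the standard classical proof of this fact (essentially due to Lucas and Carmichael): reduce the characteristic polynomial $x^2-Px+Q$ modulo $\ell$, and either compute $u_n\equiv n\bar\alpha^{n-1}$ in the repeated-root case, or identify $m_\ell$ with the multiplicative order of $\bar\alpha/\bar\beta$ in $\F_\ell^*$ or in the norm-one subgroup of $\F_{\ell^2}^*$.

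There is nothing to compare against, however: the paper does not prove this proposition. It is stated as a citation of Corollary~2.2 of Bilu--Hanrot--Voutier, with only a one-line footnote remarking that the result there is phrased for Lehmer numbers and carries over to Lucas numbers because the hypothesis $m_\ell(\alpha,\beta)>2$ forces $\ell\nmid(\alpha+\beta)$. Your write-up therefore supplies a proof where the paper offers none; the framing via a prime $\mathfrak{l}\subset\mathcal{O}_K$ is slightly heavier than necessary (working directly with the roots of $x^2-\bar Px+\bar Q$ in $\overline{\F_\ell}$, as you in fact do in the final paragraph, avoids any worry about the conductor of $\Z[\alpha]$ in $\mathcal{O}_K$), but the argument is sound either way.
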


\begin{remark}
If $\ell \mid \alpha \beta$, then either $\ell \mid u_n(\alpha,\beta)$ for all $n$, 
or $\ell \nmid u_n(\alpha,\beta)$ for all $n$.
\end{remark}

A prime  $\ell \mid u_{n}(\alpha,\beta)$ is a {\it primitive prime divisor of $u_n(\alpha,\beta)$} if $\ell \nmid (\alpha-\beta)^2 u_1(\alpha,\beta)\cdots u_{n-1}(\alpha, \beta)$.
Bilu, Hanrot, and Voutier \cite{BHV} proved that 
every Lucas number $u_n(\alpha,\beta)$, with $n>30,$
has a primitive prime divisor.
Their work is comprehensive;
they have classified {\it defective} terms, the integers
 $u_n(\alpha,\beta)$, with $n>2,$ that do 
not have a primitive prime divisor. 
 Their work, combined with a subsequent paper\footnote{This paper included a few cases that were omitted in \cite{BHV}.} 
by Abouzaid \cite{Abouzaid},   gives the {\it complete classification} of
defective Lucas numbers.
In \cite{BCO, BCOT}, these results were applied to even weight newforms, including $\Delta(z).$
Arguing as in these papers, we obtain the following lemma.

\begin{lem}\label{PPD}
Suppose $2k\geq 4$ is even, and  $\alpha$ and $\beta$ are roots of the integral polynomial
\begin{align}\label{Qpoly}
F(X)=X^2-AX+p^{2k-1}=(X-\alpha)(X-\beta),
\end{align}
where $p$ is prime, $|A|=|\alpha+\beta|\leq 2p^{\frac{2k-1}{2}},$ and $\gcd(\alpha+\beta, p)=1.$ Then there are no defective Lucas numbers {\color{black}$u_n(\alpha,\beta)\in \{\pm 2\ell^i\},$ where $i\geq0$ and} $\ell$ is an odd prime. Also, if $u_n(\alpha,\beta)=\pm\ell$ is a defective Lucas number, then one of the following is true.

\begin{enumerate}
\item We have $(A,\ell, n) = (\pm m, 3, 3),$ where $3\nmid m$ and $(p, \pm m)$ satisfies $Y^2 = X^{2k-1} \pm 3.$

\item We have $(A, \ell, n) =
(\pm \ell, \ell, 4),$ where $(p, \pm \ell)$ satisfies $Y^2 = 2X^{2k-1}-1$.
\end{enumerate}
\end{lem}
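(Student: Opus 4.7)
The plan is to apply the Bilu--Hanrot--Voutier classification of defective Lucas numbers~\cite{BHV}, together with the sporadic cases filled in by Abouzaid~\cite{Abouzaid}, to the special family of Lucas pairs arising here. Their combined main theorem guarantees that $u_n(\alpha,\beta)$ always has a primitive prime divisor for $n > 30$, so the lemma is vacuous in that range. For $3 \leq n \leq 30$, the BHV/Abouzaid tables enumerate every defective pair via a handful of one-parameter families plus a short list of sporadic cases, each pinning $A^2 = (\alpha+\beta)^2$ and $\alpha\beta$ to a specific small integer value.

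The first step is to impose the hypothesis $\alpha\beta = p^{2k-1}$, with $p$ prime and the exponent $2k-1 \geq 3$ odd, on each row of these tables. In the overwhelming majority of entries $\alpha\beta$ is forced to equal $\pm 1$, $\pm 2$, or some other small fixed integer that cannot equal $p^{2k-1}$, and those rows are discarded. After this filtering, combined with $\gcd(A,p)=1$ and $A^2 \leq 4p^{2k-1}$, only the indices $n = 3$ and $n = 4$ survive (exactly as in~\cite{BCO, BCOT}). For these two indices I would work with the explicit expressions
\begin{equation*}
u_3 = A^2 - p^{2k-1}, \qquad u_4 = A\bigl(A^2 - 2p^{2k-1}\bigr), \qquad (\alpha-\beta)^2 = A^2 - 4p^{2k-1}.
\end{equation*}

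For the first assertion (no defective $u_n$ equals $\pm 2\ell^i$), I would analyze both surviving indices directly. Defectiveness forces every prime divisor of $u_n$ to divide $(\alpha-\beta)^2\,u_2 \cdots u_{n-1}$; combining this with $\gcd(A,p)=1$ and the parity of $p^{2k-1}$ rules out $u_3 = \pm 2\ell^i$ by a short mod-$\ell$ argument, and rules out $u_4 = \pm 2\ell^i$ by noting that $\gcd(A,\, A^2 - 2p^{2k-1})$ divides $2$, forcing one of the factors of $u_4$ to be $\pm 1$ or $\pm 2$ and then checking the resulting prime-power equations. For the classification of defective $u_n = \pm\ell$: at $n = 3$, defectiveness requires $\ell \mid A(A^2 - 4p^{2k-1})$, and substituting $p^{2k-1} = A^2 \mp \ell$ reduces the second factor to $-3A^2 \pmod{\ell}$, so $\gcd(A,p)=1$ together with $\ell \neq p$ forces $\ell = 3$, with $3 \nmid A$ following from the same coprimality (since $3 \mid A$ would force $p = 3$). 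The resulting identity $A^2 = p^{2k-1} \pm 3$ is precisely the hyperelliptic curve of case~(1). At $n = 4$, $u_4 = \pm\ell$ forces $A = \pm 1$ (ruled out by a direct defectiveness check against $u_2 = A$, $u_3 = A^2 - p^{2k-1}$, and $(\alpha-\beta)^2$) or $A = \pm \ell$, and the companion relation $A^2 - 2p^{2k-1} = \pm 1$ together with the defectiveness criterion selects the $-1$ sign, giving $\ell^2 = 2p^{2k-1} - 1$ and case~(2).

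The main obstacle is the bookkeeping of running through each BHV/Abouzaid row and verifying incompatibility with $\alpha\beta = p^{2k-1}$. Conceptually, this is a direct extension of the corresponding analysis in~\cite{BCO, BCOT} from odd targets to the forms $\pm 2\ell^i$ and $\pm\ell$; the only new ingredient is careful tracking of the parities of $A$ and $p^{2k-1}$ to distinguish the even targets from the odd ones.
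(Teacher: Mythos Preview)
Your approach is essentially the paper's: invoke the BHV/Abouzaid classification, filter by the constraint $\alpha\beta=p^{2k-1}$ with $2k-1\ge 3$ and $\gcd(A,p)=1$, and inspect the survivors---the paper simply outsources the filtering and inspection to Tables~1--2 and Lemma~2.1 of \cite{BCOT} rather than recomputing directly at $n=3,4$ as you do. One small correction to your sketch: at $n=4$ with $A=\pm\ell$, \emph{both} signs in $A^2-2p^{2k-1}=\pm1$ produce a defective $u_4$ (since already $\ell\mid u_2=A$), so it is not the defectiveness criterion but the hypothesis $2k\ge4$ that eliminates the $+1$ sign, via $(A-1)(A+1)=2p^{2k-1}$ forcing $p=2$ and then admitting no solution.
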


\begin{remark}
Thanks to Lemma~\ref{PPD} and (\ref{NumPrimeFactors}), if $\tau(n)=\pm 2,$ then $n$ must be prime.
\end{remark}
\begin{proof}

As mentioned above,  \cite{Abouzaid, BHV} classify defective Lucas numbers. This classification includes a finite list of sporadic examples and a  list of parameterized infinite families. Theorem 2.2 of \cite{BCOT}  uses these results to describe the defective
Lucas numbers that can arise as newform coefficients, i.e. 
sequences defined by (\ref{Qpoly}). Tables 1 and 2 of \cite{BCOT} list the possible defective cases.

An inspection of Table 1 of \cite{BCOT}, which concerns the sporadic examples, reveals that the only possible defective numbers with $2k\geq 4$ have $2k=4.$  Moreover, they are the odd numbers $u_3(\alpha,\beta)=1$ or $u_4(\alpha,\beta)=\pm85.$

To complete the proof, we  consider the parametrized infinite families in Table 2 of \cite{BCOT}.
If  $u_n(\alpha,\beta)$ is even, then we only have to consider rows four, five, six, and seven of the table.  A simple inspection reveals that {\color{black} $\{\pm 2\ell^i\}$ for $i\geq0$} never arises.
This then leaves $u_n(\alpha,\beta)=\pm\ell$ as the only cases to consider.
However, Lemma 2.1 of \cite{BCOT} includes these cases, giving  (1) and (2) above.
\end{proof}

\section{Proof of Theorem ~\ref{MainThm}}\label{Proof}
Here we use
 the previous section to prove Theorem~\ref{MainThm}.

\subsection{Ramanujan's Congruences}

Ramanujan's classical congruences for the tau-function imply the following convenient fact involving the sets
$N^{\varepsilon}$ defined in (\ref{Goodj}).

\begin{lem}\label{CongruenceReduction} 
If $3\leq \ell<100$ is prime and
$(\ell,j)\in N^{\varepsilon},$ then for every prime $p$ we have that
$$\tau(p)\neq \varepsilon 2 \ell^j.$$
\end{lem}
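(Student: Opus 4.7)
The plan is to invoke Ramanujan's classical congruences for the tau-function at primes $p$, and to show that for $j\in N^{\varepsilon}$ the candidate value $\varepsilon 2\ell^{j}$ violates at least one of them.  For primes $p$ distinct from the relevant small prime, Ramanujan proved
\[
\tau(p)\equiv 1+p\pmod{3},\qquad \tau(p)\equiv p+p^{10}\pmod{5},\qquad \tau(p)\equiv p+p^{4}\pmod{7},
\]
while Serre and Deligne showed, via the mod-$23$ Galois representation attached to $\Delta$ (which factors through $\mathrm{Gal}(L/\mathbb{Q})\simeq S_{3}$, with $L$ the splitting field of $x^{3}-x-1$), that $\tau(p)\bmod 23\in\{0,-1,2\}$ for $p\neq 23$.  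By Fermat's little theorem the first three congruences reduce to membership statements $\tau(p)\bmod M\in T_{M}$, where
\[
T_{3}=\{0,2\},\quad T_{5}=\{0,1,2\},\quad T_{7}=\{0,1,2,4\},\quad T_{23}=\{0,-1,2\}.
\]

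For each prime $3\leq\ell<100$ and each sign $\varepsilon$, I would compute $\varepsilon 2\ell^{j}\bmod M$ for $M\in\{3,5,7,23\}$; this is periodic in $j$ with period $\mathrm{ord}_{M}(\ell)$.  Requiring simultaneously that $\varepsilon 2\ell^{j}\bmod M\in T_{M}$ for all four moduli therefore yields a (possibly empty) finite union of arithmetic progressions in $j$ modulo $\mathrm{lcm}_{M}\mathrm{ord}_{M}(\ell)$.  A direct case-by-case check will show that these admissible progressions are exactly the triples recorded in $S^{\varepsilon}$, so that for $(\ell,j)\in N^{\varepsilon}$ at least one of the four congruences above fails and consequently $\tau(p)\neq\varepsilon 2\ell^{j}$ for every prime $p\notin\{3,5,7,23\}$.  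The finitely many cases with $p\in\{3,5,7,23\}$ are handled by inspecting the tabulated values of $\tau(p)$, and the cases $\ell\in\{3,5,7,23\}$, where one of the four Ramanujan congruences is vacuous, use only the remaining three.

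The main obstacle is bookkeeping rather than technique: the case analysis runs over the $27$ primes $\ell$ and both signs, intersecting up to four congruence conditions in each case, and one must verify (for instance) that the seemingly larger LCM $\mathrm{lcm}_{M}\mathrm{ord}_{M}(\ell)$ always collapses to a divisor of $44$ once non-restricting residues have been excluded.  No input deeper than Ramanujan's congruences, Fermat's little theorem, and elementary arithmetic in finite cyclic groups is required.
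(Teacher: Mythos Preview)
Your proposal is correct and follows essentially the same route as the paper: both arguments use Ramanujan's congruences for $\tau$ modulo $3$, $5$, $7$, and $23$ to pin down the residues $\tau(p)\bmod M$ to the sets $T_{M}$ you list, and then verify by a finite periodic computation in $j$ that $\varepsilon 2\ell^{j}\bmod M$ escapes at least one $T_{M}$ precisely when $(\ell,j)\in N^{\varepsilon}$. The only cosmetic differences are that the paper quotes the congruences in the form $\tau(n)\equiv n^{a}\sigma_{b}(n)$ valid for all $n$ (so only $p=23$ needs separate treatment, whereas you also set aside $p\in\{3,5,7\}$), and that the paper attributes the mod~$23$ statement to Ramanujan rather than to the later Galois-representation interpretation.
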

\begin{proof}
We recall the famous
Ramanujan congruences 
(see \cite{RamanujanUnpublished, Ramanujan}):
\begin{displaymath}
\tau(n)\equiv \begin{cases} n^3\sigma_1(n)\pmod 4,\\
n^2\sigma_1(n)\pmod 3,\\
n\sigma_1(n)\pmod 5,\\
n\sigma_3(n)\pmod 7.
\end{cases}
\end{displaymath}
where $\sigma_v(n):=\sum_{1\leq d\mid n}d^v.$
Furthermore, if $p\neq 23$ is prime, Ramanujan proved that
$$
\tau(p)\equiv \begin{cases} 0\pmod{23} \ \ \ \ &{\text {\rm if }} \leg{p}{23}=-1,\\
\sigma_{11}(p)\pmod{23^2} \ \ \ \ &{\text {\rm if  $p=a^2+23b^2$ with $a,b\in \Z$}},\\
-1\pmod{23} \ \ \ \ &{\text {\rm otherwise.}}
\end{cases}
$$
If $p\neq 23$ is prime, then the collection of these congruences imply
\begin{displaymath}
\begin{split}
&\tau(p)\equiv 0\!\!\!\pmod 2,\  \tau(p)\equiv 0, 2\!\!\!\pmod 3,\  \tau(p)\equiv 0, 1, 2\!\!\!\pmod 5,\\
&\tau(p)\equiv 0, 1, 2, 4\!\!\!\pmod 7, \ \ {\text {\rm and}} \ \ \tau(p)\equiv 0, -1, 2\!\!\!\pmod{23}.
\end{split}
\end{displaymath}
These congruences are easily reformulated in terms of
$N^{\varepsilon}.$
This completes the proof for $p\neq 23.$
Finally, we note that
$\tau(23)=18643272= 2^3\cdot 3\cdot 617\cdot 1259.$
\end{proof}

\subsection{Proof of Theorem~\ref{MainThm}}

Theorem~\ref{MainThm} consists of two different types of $\alpha.$ 
\begin{enumerate}
\item The case where $\alpha=\pm 2\ell,$ where $3\leq \ell \leq 100$ is prime or $\ell=691.$
\item The case where $\alpha=\pm 2\ell^j,$ where $3\leq \ell \leq 100$ is prime and $j\geq 2.$
\end{enumerate}
By Lemma~\ref{PPD} with $2k=12,$ the numbers {\color{black}$\{\pm 2\ell^i\}$ for $i\geq0$}
(if they arise) are never defective Lucas numbers in 
$\{\tau(p), \tau(p^2), \tau(p^3),\dots\},$
where $p$ is prime.  Lemma~\ref{PPD} (1) and (2) covers the cases apart from $\pm \ell,$ which were ruled out by Lemma 2.1 of \cite{BCO}.

\smallskip
\noindent
{\bf Case} (1). Thanks to (\ref{NumPrimeFactors}), if $\tau(n)=\pm 2\ell,$ where $\ell$ is an odd prime, then
either $n=p_1^{m_1}$, or $n=p_1^{m_1}p_2^{m_2}$, where the $p_i$ are prime and the $m_i\geq 1.$
Using Theorem~\ref{Newforms} (1) and (\ref{Known}), the latter case requires
$|\tau(p_1^{m_1})|=2$ and $|\tau(p_2^{m_2})|=\ell.$
Thanks to (\ref{Known}) again, this is impossible for $\ell=691$ and primes $3\leq \ell < 100.$

Therefore, we may assume that
$\tau(p_1^{m_1})=\pm 2\ell.$  Thanks to Theorem~\ref{Newforms}, we have that $p_1\neq 2,$ as $4\mid \tau(2^m)$ for every positive integer $m.$
Therefore,
(\ref{ThetaCong}) implies that $m_1$ is odd.
Moreover, since $\tau(p_1)$ is even, it must be that $\tau(p_1^{m_1})$ is the first term in the Lucas sequence that is divisible by $\ell.$ Otherwise, $\pm 2\ell$ would be defective, contradicting Lemma~\ref{PPD}. {\color{black} If $m_1+1$ has a non-trivial divisor other than $2$, then by the relative divisibility of Lucas numbers given in Proposition~\ref{PropA}, and the nondefectivity of $\pm 2$ in Lemma~\ref{PPD}, we obtain a contradiction. Hence, it boils down to considering the case when $m_1+1=4$, $\tau(p)=\pm 2$, and $\tau(p^3)=\pm 2\ell$ for some prime $p$. However, using the Hecke relation ((2) in Theorem~\ref{Newforms}), we have that  $\pm2\ell=\pm4(p^{11}-2)$, and there is no such $p,$ as the left hand side is $2\!\pmod 4$ while the right hand side is $0\!\pmod 4.$ Hence, it follows that $m_1+1$ is prime.}

Therefore, we have $m_1=1,$ which in turn leads to  $\tau(p_1)=\pm 2\ell.$ The proof in this case is complete as
Lemma~\ref{CongruenceReduction}  shows that $\tau(p)\neq \pm 2 \ell.$

\smallskip
\noindent
{\bf Case} (2). Since $3\leq \ell < 100$ is prime, (\ref{Known}) and Theorem 6 of \cite{BGPS} implies that $|\tau(n)|\neq \ell^b$ for all $n$ and $b\geq 1.$
Therefore, we may assume that $\tau(p^{m})=\pm 2\ell^j,$ where $p$ is an odd prime.  Here we again use the fact that $4\mid \tau(2^m)$ for every positive integer $m,$ {\color{black}and consider the degenerate case $m_1+1=4$, $\tau(p)=\pm 2$, and $\tau(p^3)=\pm 2\ell^j$ for some prime $p$, which gives the corresponding equation $\pm2\ell^j=\pm4(p^{11}-2).$} The argument  in Case~(1), where the conclusion is that $m=1$, applies {\it mutatis mutandis}. Therefore, the proof is complete as Lemma~\ref{CongruenceReduction} shows that $\tau(p)\neq \pm 2\ell^j.$

\end{document}